\numberwithin{equation}{section}
\newtheorem{Th}{Theorem}[section]
\newtheorem{Rem}[Th]{Remark}
\newtheorem{Ex}[Th]{Example}
\newtheorem{Lemma}[Th]{Lemma}
\newtheorem{Def}[Th]{Definition}
\newtheorem{Prop}[Th]{Proposition}
\newtheorem{Cor}[Th]{Corollary}
\renewcommand{\section}%
   {\setcounter{equation}{0}\@startsection {section}{1}{\z@}{-3.5ex plus -1ex
  minus -.2ex}{2.3ex plus .2ex}{\Large\bf}}
\def\CBr{\mathbb R}
\def\CBc{\mathbb C}
\def\CBn{\mathbb N}
\def\CBs{\mathcal S}
\newcommand{\beqsn}{\arraycolsep1.5pt\begin{eqnarray*}}
\newcommand{\eeqsn}{\end{eqnarray*}\arraycolsep5pt}
\newcommand{\beqs}{\arraycolsep1.5pt\begin{eqnarray}}
\newcommand{\eeqs}{\end{eqnarray}\arraycolsep5pt}
\title{About the nuclearity of $\CBs_{(M_{p})}$ and $\CBs_{\omega}$}
\author[Boiti]{Chiara Boiti}
\address{
Dipartimento di Matematica e Informatica \\Universit\`a di Ferrara\\
Via Ma\-chia\-vel\-li n.~30\\
I-44121 Ferrara\\
Italy}
\email{chiara.boiti@unife.it}
\author[Jornet]{David Jornet}
\address{
Instituto Universitario de Matem\'atica Pura y Aplicada IUMPA\\
Universitat Po\-li\-t\`ecni\-ca de Val\`encia\\
Camino de Vera, s/n\\
E-46071 Valencia\\
Spain}
\email{djornet@mat.upv.es}
\author[Oliaro]{Alessandro Oliaro}
\address{Dipartimento di Matematica\\ Universit\`a di Torino\\
 Via Carlo Alberto n. 10\\ I-10123 Torino\\ Italy}
 \email{alessandro.oliaro@unito.it}
\begin{document}

\keywords{nuclear spaces; weighted spaces of ultradifferentiable functions of Beurling type.}


\dedicatory{Dedicated to Prof. Luigi Rodino on the occasion of his 70$^{th}$ 
birthday.}

\maketitle 

\begin{abstract}
We use an isomorphism established by Langenbruch between some sequence spaces and weighted spaces of generalized functions to give sufficient conditions for the (Beurling type) space  $\CBs_{(M_p)}$ to be nuclear. As a consequence, we obtain that for a weight function $\omega$ satisfying the mild condition: $2\omega(t)\leq \omega(Ht)+H$ for some $H>1$
and for all $t\geq0$, the space $\CBs_\omega$ in the sense of Bj\"orck is also nuclear.\end{abstract}


%
\markboth{\sc About the nuclearity of $\CBs_{(M_{p})}$ and $\CBs_{\omega}$}
{\sc C.~Boiti, D.~Jornet and A.~Oliaro}

 

%
%
%
%

\maketitle

\section{Introduction and preliminaries}
\label{CBsec1}

For a sequence $(M_{p})_{p\in\mathbb{N}_{0}}$ which satisfies Komatsu's standard condition $(M2)'$  (stability under differential operators) and, moreover, the condition that there is $H > 0$ such that for any $C >0$ there is $B >0$ with 
\begin{equation}
\label{CB12lan}
s^{s/2} M_p\leq BC^sH^{s+p}
M_{s+p},\qquad\mbox{for any }\ s,p\in\CBn_0,
\end{equation}
where $\CBn_0:=\CBn\cup\{0\}$,
Langenbruch~\cite{CBlan} proves that the Hermite functions are a Schauder basis in the spaces of ultradifferentiable functions of (Beurling type):
\begin{align*}
\CBs_{(M_p)}(\CBr^d):=\Big\{&f\in C^\infty(\CBr^d): \ \mbox{ for any } j\in\CBn,\\
&\sup_{\alpha,\beta\in\CBn_0^d}\sup_{x\in\CBr^d}|x^\alpha D^\beta f(x)|
j^{|\alpha+\beta|}/M_{|\alpha+\beta|}<+\infty\ \Big\}.
\end{align*}
Moreover, in \cite{CBlan} it is also established an isomorphism between $\CBs_{(M_{p})}$ and the K\"othe sequence space:
 \begin{equation*}
\Lambda_{(M_p)}:=\Big\{(c_k)_{k\in\CBn_0}:\ \mbox{ for any }j\in\CBn_0, \ \
\sup_{k\in\CBn_0}|c_k|e^{M(jk^{1/2})}<+\infty\Big\},
\end{equation*}
where 
\begin{equation}
\label{CBmt}
M(t)=\sup_{p}\log\frac {t^pM_0}{M_p},\qquad t>0,
\end{equation}
is  the \emph{associated function} of $(M_p)$.

In this paper we use Grothendieck-Pietsch criterion to characterize when the space $\Lambda_{(M_{p})}$ is nuclear under the assumption that $(M_{p}/M_{0})^{1/p}$ is bounded below by a positive constant and, hence, $M(t)$ is increasing and convex in $\log t$ (see \cite[p.~49]{CBk1}). Indeed, we prove in Theorem~\ref{CBlemma5} that  $\Lambda_{(M_{p})}$ is nuclear if and only if  there is $H>1$ such that for any $t>0$ we have
\begin{equation}
\label{CB4}
M(t)+\log t\leq M(Ht)+H.
\end{equation}
As it is observed in \cite[(2.2)]{CBlan}, condition $(M2)'$ implies \eqref{CB4}. Therefore, conditions $(M2)'$ and \eqref{CB12lan} imply that $S_{(M_{p})}$ is nuclear 
(see Corollary~\ref{CBsmpnuclear}). This should be compared with \cite{CBppv}, where the authors prove that $S_{(M_{p})}$ is nuclear  under Komatsu's conditions $(M1)$ and $(M2).$

As a consequence of Theorem~\ref{CBlemma5} we give a simple proof of the nuclearity of the space $\CBs_\omega$ in the sense of Bj\"orck~\cite{CBbj} given in Definition~\ref{CBsomega}
under the
following condition of Bonet, Meise and Melikhov~\cite{CBbmm} on the weight function $\omega$:
\begin{equation*}
\hspace*{-31mm}\mbox{(BMM)}\qquad\quad
\exists H>1 \ \mbox{s.t.}\quad
2\omega(t)\leq\omega(Ht)+H,\qquad t\geq0.
\end{equation*}
In fact, in this case the space $\CBs_\omega$ is isomorphic to the space $\CBs_{(M_p)}$ for some suitable sequence $(M_{p})$.

\section{Results for the space $S_{(M_{p})}$}
In this section we characterize the nuclearity of $\Lambda_{(M_{p})}$ and give
sufficient conditions for the nuclearity $\CBs_{(M_p)}$. 

We consider a sequence $(M_p)_p$ satisfying the condition that $(M_p/M_0)^{1/p}$
is bounded from below by a positive constant, so that the associated function defined by
\eqref{CBmt} is increasing and convex in $\log t$.

From Grothendieck-Pietsch criterion it is easy to obtain the following
\begin{Lemma}
\label{CBlemma4}
The K\"othe sequence space  $\Lambda_{(M_{p})}$ is nuclear if and only if
for every $j\in\CBn$ there exists $m\in\CBn$ with $m\geq j$ such that
\begin{equation}
\label{CB1}
\sum_{k=0}^{+\infty}e^{M(jk^{1/2})-M(mk^{1/2})}<+\infty.
\end{equation}
\end{Lemma}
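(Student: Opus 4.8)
The plan is to apply the Grothendieck-Pietsch nuclearity criterion for Köthe echelon spaces. The space $\Lambda_{(M_p)}$ is defined with the weight system $a_k^{(j)} := e^{M(jk^{1/2})}$, where the index $j$ ranges over $\CBn_0$ (equivalently over $\CBn$, since larger $j$ gives a finer seminorm). Let me verify this is a genuine Köthe matrix. Since $(M_p/M_0)^{1/p}$ is bounded below by a positive constant, the associated function $M(t)$ is increasing; hence for fixed $k$ the weights increase in $j$, i.e.\ $a_k^{(j)} \le a_k^{(m)}$ whenever $j \le m$, and all weights are strictly positive. Thus $(a_k^{(j)})_{k,j}$ forms a Köthe matrix and $\Lambda_{(M_p)}$ is the associated Köthe sequence space of all $(c_k)$ with $\sup_k |c_k| a_k^{(j)} < \infty$ for every $j$.

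Next I would recall the precise statement of the Grothendieck-Pietsch criterion in the form adapted to a sup-based (rather than $\ell^1$-based) Köthe space: such a space is nuclear if and only if for every $j$ there exists $m \ge j$ with
\begin{equation*}
\sum_{k=0}^{+\infty} \frac{a_k^{(j)}}{a_k^{(m)}} < +\infty.
\end{equation*}
This is exactly the standard criterion; the only mild point to check is that passing from the $\ell^\infty$-type seminorms to the $\ell^1$-type summability condition is legitimate here, which it is because the two Köthe space topologies coincide as locally convex spaces when the criterion (the existence of a summable quotient row) holds, and nuclearity is a topological invariant. Substituting the explicit weights $a_k^{(j)} = e^{M(jk^{1/2})}$ turns the quotient $a_k^{(j)}/a_k^{(m)}$ into $e^{M(jk^{1/2}) - M(mk^{1/2})}$, so the criterion becomes precisely condition~\eqref{CB1}, and the equivalence in the Lemma follows immediately.

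The only step requiring any care, and thus the main (though minor) obstacle, is justifying that the Grothendieck-Pietsch criterion in $\ell^1$-form may be applied to a space originally defined through $\sup$-seminorms. I would handle this by the observation that for any two rows of a Köthe matrix, convergence of the series $\sum_k a_k^{(j)}/a_k^{(m)}$ is equivalent, up to enlarging $m$, to convergence of analogous ratio series for the corresponding $\ell^p$-type weighted spaces for all $p \in [1,\infty]$; since nuclearity does not depend on which of these realizations one uses, the $\ell^\infty$ (i.e.\ $\sup$) realization is nuclear precisely when the $\ell^1$-criterion \eqref{CB1} holds. Everything else is a direct substitution, which is why the Lemma is stated as following ``easily'' from the criterion.
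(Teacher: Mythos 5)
Your proposal is correct and follows essentially the same route as the paper, which simply invokes Proposition~28.16 of Meise--Vogt (the Grothendieck--Pietsch criterion) applied to the K\"othe matrix $a_k^{(j)}=e^{M(jk^{1/2})}$. The point you flag about passing from the $\sup$-type seminorms to the $\ell^1$-type summability condition is exactly what that proposition handles (nuclearity of $\lambda^\infty(A)$, equality $\lambda^1(A)=\lambda^\infty(A)$, and the summable-quotient condition are all equivalent), so your extra justification is sound and consistent with the paper's one-line proof.
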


\begin{proof}
It follows from Proposition 28.16 of \cite{CBmv}.
\end{proof}

\begin{Th}
\label{CBlemma5}
The space $\Lambda_{(M_{p})}$ is nuclear if and
only if \eqref{CB4} holds.
\end{Th}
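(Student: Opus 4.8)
The plan is to base everything on the Grothendieck--Pietsch reduction already recorded in Lemma~\ref{CBlemma4}: $\Lambda_{(M_p)}$ is nuclear precisely when, for each $j\in\CBn$, some $m\ge j$ makes the series \eqref{CB1} converge. It is convenient to pass to the variable $u=\log t$ and to write $\phi(u):=M(e^u)$, which by hypothesis is nondecreasing and convex on $\R$. In these terms \eqref{CB4} reads $\phi(u+\log H)-\phi(u)\ge u-H$ for all $u$, and one checks at once that the existence of $H>1$ satisfying \eqref{CB4} is equivalent to the (formally weaker) existence of constants $A>1$ and $B>0$ with $\phi(u+\log A)-\phi(u)\ge u-B$ for all $u$; indeed, since $\phi$ is nondecreasing, any such pair yields \eqref{CB4} with $H=\max\{A,B,2\}$. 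I will use this freedom in the second half.

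For sufficiency (\eqref{CB4} $\Rightarrow$ nuclear) I would simply iterate \eqref{CB4}. Applying it successively to $t,Ht,\dots,H^{n-1}t$ and telescoping gives
\[
M(H^n t)\ \ge\ M(t)+n\log t+\tfrac{n(n-1)}2\log H-nH,\qquad t>0,\ n\in\CBn.
\]
Substituting $t=jk^{1/2}$ (so that $n\log t=n\log j+\tfrac n2\log k$) and taking $n=3$, I get $M(jk^{1/2})-M(H^3 jk^{1/2})\le-\tfrac32\log k-c_{j}$ for a constant $c_j$ depending only on $j$ and $H$. Choosing $m:=\lceil H^3 j\rceil\ (>j)$ and using that $M$ is nondecreasing, the summand in \eqref{CB1} is bounded by a constant times $k^{-3/2}$, so the series converges (the $k=0$ term being trivially finite); Lemma~\ref{CBlemma4} then gives nuclearity. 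The exponent $-\tfrac n2$ is why three iterations are needed: $n=2$ would only yield the divergent bound $k^{-1}$.

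The substantive direction is necessity. Assuming nuclearity, Lemma~\ref{CBlemma4} with $j=1$ furnishes an integer $m\ge2$ with $\sum_{k\ge1}e^{\phi(v_k)-\phi(v_k+\log m)}<\infty$, where $v_k:=\tfrac12\log k$. Set $\Delta(v):=\phi(v+\log m)-\phi(v)$. The decisive observation is that convexity of $\phi$ makes $\Delta$ \emph{nondecreasing}, hence the summands $a_k:=e^{-\Delta(v_k)}$ form a nonincreasing sequence. A nonincreasing summable sequence satisfies $k a_k\to0$, i.e. $\Delta(v_k)-\log k\to+\infty$; since $\log k=2v_k$, this yields $\Delta(v_k)\ge 2v_k$ for all large $k$. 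It remains to interpolate from the sampling points $v_k$ to an arbitrary $v$. Because the gaps $v_{k+1}-v_k=\tfrac12\log(1+1/k)\to0$ while $\Delta$ is nondecreasing, for each large $v$ I choose $k$ with $v_k\le v<v_{k+1}$ and obtain $\Delta(v)\ge\Delta(v_k)\ge 2v_k\ge 2v-1$, whence $\Delta(v)\ge v-1$ for $v$ large; on the remaining compact range $\Delta(v)-v$ is bounded below. Thus $\phi(v+\log m)-\phi(v)\ge v-B$ for all $v\ge0$ (and trivially for $v<0$), which by the equivalence of the first paragraph is exactly \eqref{CB4}.

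I expect the main obstacle to be this last passage in the necessity proof: turning the \emph{global} information ``the series converges'' into the \emph{pointwise} estimate \eqref{CB4}. The three ingredients that make it work are (i) the monotonicity of the increment $\Delta$, which is the only place convexity of $M$ in $\log t$ is really used; (ii) the elementary fact that nonincreasing summable sequences obey $ka_k\to0$, which upgrades mere summability to the needed logarithmic growth of $\Delta(v_k)$; and (iii) the asymptotic density of the points $v_k=\tfrac12\log k$, which lets monotonicity carry the estimate from the sample points to the whole half-line.
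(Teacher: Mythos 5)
Your proof is correct and takes essentially the same route as the paper's: both directions rest on Lemma~\ref{CBlemma4}, sufficiency by iterating \eqref{CB4} to gain a factor $k^{-N/2}$ with $N>2$, and necessity by using convexity of $M(e^u)$ to make the increment monotone, invoking $ka_k\to0$ for nonincreasing summable sequences, and interpolating from the sample points $jk^{1/2}$ (your $v_k=\tfrac12\log k$) to all $t$ via the vanishing gaps. The only cosmetic differences are your passage to logarithmic variables and the specialization to $j=1$.
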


\begin{proof}
Let us first remark that \eqref{CB4} implies 
\begin{align*}
M(t)+2\log t&=M(t)+\log t+\log t\\
&\leq M(Ht)+H+\log(Ht)-\log H\\
&\leq M(H^2t)+2H-\log H
\end{align*}
and, more in general,
\begin{equation}
\label{CBlog}
M(t)+N\log t\leq M(H^Nt)+C_{N,H}, \qquad\forall N\in\CBn,
\end{equation}
for some constant $C_{N,H}>0$ depending on $N$ and $H$.

Let us now assume that \eqref{CB4} is satisfied and prove the nuclearity of 
$\Lambda_{(M_{p})}$, using \eqref{CBlog} for a fixed $N>2$.
By Lemma~\ref{CBlemma4},  it's enough to prove  the convergence of the series
\eqref{CB1}.
Indeed, for every
fixed $j\in\CBn$, choosing $m\geq H^Nj$,
\begin{align*}
e^{M(jk^{1/2})-M(mk^{1/2})}&\leq e^{M(jk^{1/2})-M(H^Njk^{1/2})}\\
&\leq e^{M(jk^{1/2})-M(jk^{1/2})-N\log(jk^{1/2})+C_{N,H}}\\
&=e^{C_{N,H}} j^{-N}\frac{1}{k^{N/2}}
\end{align*}
and the series $\sum_{k=1}^{+\infty}\frac{1}{k^{N/2}}$
converges since $N>2$.

Let us now assume that the series \eqref{CB1} converges and prove \eqref{CB4}.
To this aim,
let us first remark that, for $m>j$,
\begin{equation*}
k\longmapsto M(jk^{1/2})-M(mk^{1/2})
\end{equation*}
is decreasing, because $M(e^t)$ is convex by our assumptions (see \cite[p. 49]{CBk1}), 
and therefore its difference quotient $\frac{M(e^t)-M(e^s)}{t-s}$ is increasing
with respect to both variables $t$ and $s$; this implies that
\begin{equation*}
M(mk^{1/2})-M(jk^{1/2})=
\frac{M\left(e^{\log mk^{1/2}}\right)-
M\left(e^{\log jk^{1/2}}\right)}{\log mk^{1/2}-\log jk^{1/2}}
\log\frac mj
\end{equation*}
is increasing with respect to $k$.

Then 
the convergence of \eqref{CB1} implies that
\begin{equation*}
\lim_{k\to+\infty}ke^{M(jk^{1/2})-M(mk^{1/2})}=0
\end{equation*}
and hence
\begin{equation*}
\sup_{k\in\CBn}ke^{M(jk^{1/2})-M(mk^{1/2})}\leq A,
\end{equation*}
for some $A\in\CBr^+$.
Then
\begin{equation*}
\log k+M(jk^{1/2})-M(mk^{1/2})\leq \log A,\qquad \forall k\in\CBn,
\end{equation*}
and hence
\begin{equation}
\label{CBalmost4}
\begin{split}
M(jk^{1/2})-M(mk^{1/2})\leq&-\log k+\log A
=-2\log (jk^{1/2})+\log(j^2 A)\\
\leq&-\log (jk^{1/2})+\log(j^2 A).
\end{split}
\end{equation}

To prove that \eqref{CBalmost4} implies \eqref{CB4} let us first condider $t\geq1$
and choose the smallest $k\in\CBn$ such that $t\leq jk^{1/2}$.
Since
\begin{equation*}
j(k+1)^{1/2}-jk^{1/2}=\frac{j}{\sqrt{k+1}+\sqrt{k}}<j,\qquad\forall k\in\CBn,
\end{equation*}
we have that $jk^{1/2}\in[t,(j+1)t]$ and therefore, from \eqref{CBalmost4},
\begin{align*}
M(t)+\log t&\le M(jk^{1/2})+\log(jk^{1/2})
\le M(mk^{1/2})+\log(j^2A)\\
&=M\left(\frac mj jk^{1/2}\right)+\log(j^2A)\\
&\le M\left(\frac mj (j+1)t\right)+\log(j^2A),
\qquad\forall t\geq1,
\end{align*}
and hence, for $H=\max\left\{\frac mj(j+1), \log(j^2A)+M(1)\right\}$, 
we have that
\eqref{CB4} is satisfied for all $t>0$.
\end{proof}
So, we automatically obtain
\begin{Cor}
\label{CBsmpnuclear}
If $(M_{p})$ satisfies $(M2)'$ and \eqref{CB12lan}, the space $S_{(M_{p})}$ is nuclear.
\end{Cor}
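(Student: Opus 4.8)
The plan is to transport the characterization of Theorem~\ref{CBlemma5} from the K\"othe sequence space $\Lambda_{(M_p)}$ to $S_{(M_p)}$ through the isomorphism of Langenbruch. The first step is to invoke \cite{CBlan}: under $(M2)'$ together with \eqref{CB12lan}, the Hermite functions form a Schauder basis of $S_{(M_p)}$ and the map sending a function to its sequence of Hermite coefficients is a linear topological isomorphism $S_{(M_p)}\cong\Lambda_{(M_p)}$. It is exactly here that condition \eqref{CB12lan} is used, since it is the hypothesis that guarantees both the basis property and the identification of the coefficient space with $\Lambda_{(M_p)}$.

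Next I would record that, as observed in \cite[(2.2)]{CBlan}, condition $(M2)'$ already implies the inequality \eqref{CB4}, namely the existence of $H>1$ with $M(t)+\log t\le M(Ht)+H$ for all $t>0$. Feeding \eqref{CB4} into Theorem~\ref{CBlemma5} shows that $\Lambda_{(M_p)}$ is nuclear. Since nuclearity is preserved under linear topological isomorphisms of locally convex spaces, the isomorphism from the first step then forces $S_{(M_p)}$ to be nuclear as well.

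The proof is thus a short chaining of results already established or cited, and carries no genuine analytic difficulty of its own. The only point demanding attention is the coordination of hypotheses at the two citation steps: one must verify that $(M2)'$ and \eqref{CB12lan} are precisely the conditions under which Langenbruch's isomorphism is available, and that $(M2)'$ by itself yields \eqref{CB4}. Once these are in place, Theorem~\ref{CBlemma5} and the topological invariance of nuclearity complete the argument.
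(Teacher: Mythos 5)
Your argument coincides with the paper's proof: both invoke Theorem~3.4 of \cite{CBlan} to identify $S_{(M_p)}$ with $\Lambda_{(M_p)}$ under $(M2)'$ and \eqref{CB12lan}, note that $(M2)'$ implies \eqref{CB4} via \cite[(2.2)]{CBlan}, and conclude by Theorem~\ref{CBlemma5} together with the invariance of nuclearity under isomorphism. The proposal is correct and takes essentially the same route.
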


\begin{proof}
The spaces $S_{(M_{p})}$ and $\Lambda_{(M_{p})}$ are isomorphic because $(M_p)$ satisfies $(M2)'$ and \eqref{CB12lan}  by Theorem~3.4 of \cite{CBlan}. Since $(M2)'$ implies \eqref{CB4} (see for instance \cite[(2.2)]{CBlan}), the result follows from Theorem~\ref{CBlemma5}.
\end{proof}

\begin{Rem}
\label{rem3}
\begin{em}
Looking inside the proof of Theorem 3.4 of \cite{CBlan} we can see that in fact
Langenbruch needs only \eqref{CB12lan} and \eqref{CB4}, so that in the above corollary
we could substitute the assumption $(M2)'$ with the condition that the associated function
$M(t)$ satisfies \eqref{CB4}.
\end{em}
\end{Rem}

\section{Results for the space $S_{\omega}.$ Examples}

In this section we  give a sufficient condition for the space $S_{\omega}$ in the sense of Bj\"orck~\cite{CBbj} to be nuclear. 

We
 consider  continuous increasing {\em weight functions} $\omega:\ [0,+\infty)\to
[0,+\infty)$ satisfying:
\begin{itemize}
\item[$(\alpha)$]
$\quad\exists L>0\ $ s.t. $\omega(2t)\leq L(\omega(t)+1),\quad\forall t\geq0$;
\item[$(\beta)$]
$\quad\omega(t)=o(t),\quad$ as $t\to+\infty$;
\item[$(\gamma)$]
$\quad\exists a\in\CBr,\ b>0\ $  s.t. $\omega(t)\geq a+b\log(1+t),
\quad\forall t\geq0$;
\item[$(\delta)$]
$\quad\varphi:\ t\mapsto\omega(e^t)$ is convex.
\end{itemize}
Then we define
$\omega(\zeta):=\omega(|\zeta|)$ for $\zeta\in\CBc^d$.

We denote by $\varphi^*$ the {\em Young conjugate} of $\varphi$, defined by
\begin{equation*}
\varphi^*(s):=\sup_{t\geq0}(ts-\varphi(t)).
\end{equation*}

We recall that $\varphi^*$ is increasing and convex, 
$\varphi^{**}=\varphi$ and  $\varphi^*(s)/s$ is increasing.
Moreover, it will be not restrictive, in the following, to assume $\left.\omega\right|_{[0,1]}\equiv0$ and hence
$\varphi^*(0)=0$.

The space $\CBs_\omega(\CBr^d)$ of weighted rapidly decreasing functions is 
then defined by (see \cite{CBbj}):
\begin{Def}
\label{CBsomega}
$\CBs_\omega(\CBr^d)$ is the set of all $u\in L^1(\CBr^d)$ such that $u,\hat{u}\in
C^\infty(\CBr^d)$ and
\begin{itemize}
\item[(i)]
$\ \forall\lambda>0,\alpha\in\CBn^d_0:\ \sup_{x\in\CBr^d}e^{\lambda
\omega(x)}|D^\alpha u(x)|<+\infty$,
\item[(ii)]
$\ \forall\lambda>0,\alpha\in\CBn^d_0:\ \sup_{\xi\in\CBr^d}e^{\lambda
\omega(\xi)}|D^\alpha \hat{u}(\xi)|<+\infty$,
\end{itemize}
where $D^\alpha=(-i)^{|\alpha|}\partial^\alpha$.
\end{Def}

Note that 
\begin{equation}
\label{CBomega0}
\omega_0(t)=
\begin{cases}
0,&0\leq t\leq 1\cr
\log t, &t>1
\end{cases}
\end{equation}
is a weight function for which $\CBs_{\omega_0}(\CBr^d)$ coincides with
 the classical Schwartz 
class $\CBs(\CBr^d)$.

The space $\CBs_\omega(\CBr^d)$ is a Fr\'echet space 
with different equivalent systems of seminorms (cf. \cite{CBbjo1}, \cite{CBbjo2}, \cite{CBbjo3}).
In particular, we shall use in what follows the family of seminorms
\begin{equation}
\label{CBeffe}
p_\lambda(u)=\sup_{\alpha,\beta\in\CBn_0^d}\sup_{x\in\CBr^d}|x^\beta D^\alpha u(x)|
e^{-\lambda\varphi^*\left(\frac{|\alpha+\beta|}{\lambda}\right)}.
\end{equation}

Given a weight function $\omega$ we construct the sequence $(M_p)$ by
\begin{equation}
\label{CBmp}
M_p=e^{\varphi^*(p)},\qquad\forall p\in\CBn_0.
\end{equation}
Then the associated function of $M_p$ 
is equivalent to the given weight $\omega$.
Indeed, on one side, since $M_0=1$, we have, for $t>0$:
\begin{align*}
M(t)&=\sup_{p\in\CBn_0}\log\frac{t^p}{M_p}=\sup_{p\in\CBn_0}
\left(\log t^p-\log e^{\varphi^*(p)}\right)\\
&\leq\sup_{s\geq0}(s\log t-\varphi^*(s))=\varphi(\log t)=\omega(t).
\end{align*}
On the other side, for $t>0$:
\begin{align*}
\omega(t)&=\sup_{s\geq0}(s\log t-\varphi^*(s))
=\sup_{p\in\CBn_0}\sup_{p\leq s<p+1}(s\log t-\varphi^*(s))\\
&\leq\sup_{p\in\CBn_0}((p+1)\log t-\varphi^*(p))
=\log t+M(t)
\leq 2M(t)+\log M_1
\end{align*}
 since $M(t)\geq \log t -\log M_1$ by definition.

Therefore
\begin{equation}
\label{CBequiv}
M(t)\leq\omega(t)\leq M(t)+\log t\leq
2M(t)+A,
\qquad\forall t>0,
\end{equation}
and for some $A>0$.

Moreover,
\begin{align}
\label{CBvedi7}
\begin{split}
M_p&=e^{\varphi^*(p)}=\exp\{\sup_{t\geq0}(pt-\omega(e^t))\}
=\sup_{t\geq0}\left(e^{pt} e^{-\omega(e^t)}\right)\\
&=\sup_{s\geq1}\left(s^p e^{-\omega(s)}\right)=\sup_{s\geq0}\left(s^p e^{-\omega(s)}\right),
\end{split}
\end{align}
since $\left.\omega\right|_{[0,1]}\equiv0$.

Let us remark that the sequence $(M_p)$  satisfies $(M_p/M_0)^{1/p}\geq1$ and
the condition of
{\em logarithmic convexity}
\begin{equation*}
\hspace*{-50mm}(M1)\qquad\qquad
 M_p^2\leq M_{p-1}M_{p+1},\qquad p\in\CBn,
\end{equation*}
since 
\begin{align*}
2\varphi^*(p)=2\sup_{t\geq0}(tp-\varphi(t))
&\leq\sup_{t\geq0}\big(t(p-1)-\varphi(t)\big)+
\sup_{t\geq0}\big(t(p+1)-\varphi(t)\big)\\
&=\varphi^*(p-1)+\varphi^*(p+1).
\end{align*}

If $\omega$ satisfies condition (BMM), then also $M(t)$ satisfies condition (BMM)
because, by \eqref{CBequiv},
\begin{align}
\label{CBbmmM}
\begin{split}
2M(t)&\leq \frac12 (4\omega(t))\leq\frac12(2\omega(Ht)+2H)\\
&\leq\frac12\omega(H^2t)+\frac32H
\leq M(H^2t)+\frac A2+\frac32H.
\end{split}
\end{align}
Then, by \cite[Prop. 3.6]{CBk1}, we obtain that $(M_p)$ satisfies also the condition of
{\em stability under ultradifferential operators}:
\begin{equation*}
\hspace*{-28mm}(M2)\qquad\quad
\exists A,H>0\  \mbox{s.t.}\qquad M_p\leq AH^p\min_{0\leq q\leq p}M_qM_{p-q}.
\end{equation*}


Moreover, the sequence $(M_p)_p$ satisfies \eqref{CB12lan}.
Indeed, since $\omega(t)=o(t)$ as $t\to+\infty$, we have that for every $\varepsilon>0$
there exists $R_\varepsilon>0$ such that $\omega(t)\leq\varepsilon t+R_\varepsilon$ for all $t\geq0$. Therefore, for $s\geq\varepsilon$,
\begin{equation*}
\varphi^*(s)=\sup_{t\geq0}\left(ts-\omega(e^t)\right)
\geq\sup_{t\geq0}\left(ts-\varepsilon e^t\right)-R_\varepsilon
=s\log \frac s\varepsilon -s-R_\varepsilon,
\end{equation*}
and hence
\begin{equation*}
\left(\frac s\varepsilon\right)^s\leq e^{s+\varphi^*(s)+R_\varepsilon},
\qquad\forall s\geq\varepsilon.
\end{equation*}
Since, for $s\leq\varepsilon$ we have that $s^s\leq(\varepsilon e)^s$, we finally have that for every $s>0$:
\begin{equation*}
s^{s/2}M_p\leq s^se^{\varphi^*(p)}
\leq e^{R_\varepsilon}(\varepsilon e)^s e^{\varphi^*(s)+\varphi^*(p)}
\leq  e^{R_\varepsilon}(\varepsilon e)^se^{\varphi^*(p+s)}=
e^{R_\varepsilon}(\varepsilon e)^s M_{p+s}.
\end{equation*}

If $\omega$ satisfies (BMM), then  $\Lambda_{(M_p)}$
coincides with the sequence space
\begin{equation}
\label{CBlambdaomega}
\Lambda_{\omega}:=\Big\{(c_k)_{k\in\CBn_0}:\ 
\sup_{k\in\CBn_0}|c_k|e^{\omega(jk^{1/2})}<+\infty\ \forall j\in\CBn_0\Big\},
\end{equation}
by \eqref{CBequiv} and \eqref{CBbmmM}.

\begin{Th}
\label{CBteo3}
Let $\omega$ be a weight function. Then $\Lambda_\omega$ is nuclear if and only if
$\omega$ satisfies condition \eqref{CB4}.
\end{Th}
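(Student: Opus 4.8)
The plan is to transcribe the proof of Theorem~\ref{CBlemma5} with the weight $\omega$ in the role of the associated function $M$. The only structural facts about $M$ exploited there are that $M$ is increasing and that $M(e^t)$ is convex in $t$; for $\omega$ both are automatic, since $\omega$ is increasing and $\varphi(t)=\omega(e^t)$ is convex by assumption $(\delta)$. The first step is to record the Grothendieck--Pietsch criterion for $\Lambda_\omega$: regarding \eqref{CBlambdaomega} as a K\"othe echelon space with matrix $a_{j,k}=e^{\omega(jk^{1/2})}$, Proposition~28.16 of \cite{CBmv} gives, exactly as in Lemma~\ref{CBlemma4}, that $\Lambda_\omega$ is nuclear if and only if for every $j\in\CBn$ there is $m\geq j$ with
\[
\sum_{k=0}^{+\infty}e^{\omega(jk^{1/2})-\omega(mk^{1/2})}<+\infty ,
\]
a condition identical in form to \eqref{CB1}.

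For the implication \eqref{CB4}$\Rightarrow$ nuclearity, I would first iterate \eqref{CB4}, now read with $\omega$ in place of $M$, precisely as in the derivation of \eqref{CBlog}, to obtain $\omega(t)+N\log t\leq\omega(H^{N}t)+C_{N,H}$ for every $N\in\CBn$. Fixing $N>2$ and choosing $m\geq H^{N}j$, monotonicity of $\omega$ yields $e^{\omega(jk^{1/2})-\omega(mk^{1/2})}\leq e^{C_{N,H}}\,j^{-N}k^{-N/2}$, and $\sum_{k\geq1}k^{-N/2}$ converges because $N>2$; the $k=0$ term contributes the finite constant $1$ since $\omega|_{[0,1]}\equiv0$. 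Hence the series above converges and $\Lambda_\omega$ is nuclear.

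For the converse I would copy the second half of the proof of Theorem~\ref{CBlemma5}. Convexity of $\omega(e^t)$ makes the difference quotient $\frac{\omega(e^{t})-\omega(e^{s})}{t-s}$ increasing in both variables, so $k\mapsto\omega(mk^{1/2})-\omega(jk^{1/2})$ is increasing for $m>j$; combined with convergence of the series this forces $k\,e^{\omega(jk^{1/2})-\omega(mk^{1/2})}$ to remain bounded, which gives the pointwise estimate $\omega(jk^{1/2})-\omega(mk^{1/2})\leq-\log(jk^{1/2})+\log(j^{2}A)$ as in \eqref{CBalmost4}. Finally, for $t\geq1$ I would select the least $k\in\CBn$ with $t\leq jk^{1/2}$, use $j(k+1)^{1/2}-jk^{1/2}<j$ to keep $jk^{1/2}\in[t,(j+1)t]$, and conclude $\omega(t)+\log t\leq\omega(Ht)+H$ for a suitable $H$; for $0<t<1$ the inequality is trivial since $\omega(t)=0$ and $\log t\leq0$. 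This is \eqref{CB4} for $\omega$.

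The step that I expect to demand the most care is the reduction itself. One is tempted to deduce the result straight from Theorem~\ref{CBlemma5} through the equivalence \eqref{CBequiv} between $\omega$ and the associated function of $(M_p)=(e^{\varphi^*(p)})$; however, the identification $\Lambda_\omega=\Lambda_{(M_p)}$ is only guaranteed under (BMM), which is strictly stronger than \eqref{CB4}, and the factor $2$ and the additive $\log t$ in \eqref{CBequiv} prevent a direct comparison of the two K\"othe matrices without it. The transcription above sidesteps this difficulty, since it invokes only monotonicity of $\omega$ and convexity of $\omega\circ\exp$, both built into the notion of weight function; the only points where the original argument used special values of $M$ (namely $M(0)=0$ and $M(1)$ finite, to absorb the small-$t$ range into the final constant) are in fact cleaner for $\omega$ because $\omega|_{[0,1]}\equiv0$. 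Thus the verification reduces to checking that no estimate secretly relied on a property of $M$ unavailable for $\omega$, and none does.
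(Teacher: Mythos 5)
Your proposal is correct and is essentially the paper's own proof: the paper's argument for Theorem~\ref{CBteo3} is precisely to rerun the proof of Theorem~\ref{CBlemma5} (via Proposition~28.16 of \cite{CBmv}) for the Köthe matrix $e^{\omega(jk^{1/2})}$, using only that $\omega$ is increasing and $\omega\circ\exp$ is convex. Your closing observation about why one cannot instead shortcut through \eqref{CBequiv} without (BMM) is a correct and worthwhile clarification, but it does not change the route.
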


\begin{proof}
As in Theorem~\ref{CBlemma5}, we use \cite[Prop. 28.16]{CBmv} for the sequence 
space $\Lambda_\omega$.
\end{proof}

\begin{Ex}
\label{CBrem9}
\begin{em}
Condition \eqref{CB4} for a weight function $\omega$ is weaker than (BMM).
For instance
\begin{equation*}
\omega(t)=\begin{cases}
0,&0\leq t\leq 1\cr
\log^2 t,& t>1
\end{cases}
\end{equation*}
satisfies  \eqref{CB4} but not (BMM).
\end{em}
\end{Ex}

\begin{Cor}
\label{CBcor1}
Let $\omega$ be a weight function satisfying (BMM). Then $\Lambda_\omega$
is nuclear.
\end{Cor}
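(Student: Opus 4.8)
The plan is to read the statement off Theorem~\ref{CBteo3}. That theorem characterizes the nuclearity of $\Lambda_\omega$ through condition \eqref{CB4}, so it suffices to verify that (BMM) forces $\omega$ to satisfy \eqref{CB4}, i.e.\ that there is $\tilde H>1$ with
\begin{equation*}
\omega(t)+\log t\le\omega(\tilde Ht)+\tilde H,\qquad\forall t>0.
\end{equation*}
That (BMM) implies \eqref{CB4}, and strictly so, is precisely the content anticipated by Example~\ref{CBrem9}; thus the whole corollary reduces to this one implication.

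The first step is to iterate (BMM). Rewriting it as $\omega(Ht)\ge2\omega(t)-H$ and feeding it into itself, an immediate induction on $n$ gives
\begin{equation*}
\omega(H^nt)\ge2^n\omega(t)-(2^n-1)H,\qquad\forall t\ge0,\ n\in\CBn,
\end{equation*}
whence $\omega(H^nt)-\omega(t)\ge(2^n-1)\bigl(\omega(t)-H\bigr)$. The second step is to insert the logarithmic lower bound supplied by property~$(\gamma)$, namely $\omega(t)\ge a+b\log(1+t)\ge a+b\log t$ with $b>0$ (and with $a\le0$ under the normalization $\left.\omega\right|_{[0,1]}\equiv0$). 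For $t\ge1$ this yields
\begin{equation*}
\omega(H^nt)-\omega(t)\ge(2^n-1)b\,\log t+(2^n-1)(a-H).
\end{equation*}

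Here lies the only genuine point of the argument. The coefficient $b$ in $(\gamma)$ may be strictly smaller than $1$, so a single use of (BMM) need not recover the full $+\log t$ that \eqref{CB4} requires; the purpose of iterating is exactly to amplify $b$ into a slope $(2^n-1)b\ge1$. I would therefore fix $n$ so large that $(2^n-1)b\ge1$. Then, for $t\ge1$ (where $\log t\ge0$), the last display gives $\omega(t)+\log t\le\omega(H^nt)+(2^n-1)(H-a)$; taking $\tilde H=\max\{H^n,(2^n-1)(H-a),2\}$ and using that $\omega$ is increasing turns this into \eqref{CB4} for all $t\ge1$. For $0<t<1$ one has $\log t\le0$, so $\omega(t)+\log t\le\omega(t)\le\omega(\tilde Ht)+\tilde H$ trivially, and \eqref{CB4} holds on all of $(0,+\infty)$. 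Theorem~\ref{CBteo3} then delivers the nuclearity of $\Lambda_\omega$.

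A slightly slicker variant avoids the iteration: under (BMM) the space $\Lambda_\omega$ coincides with $\Lambda_{(M_p)}$ (by \eqref{CBequiv} and \eqref{CBbmmM}), and for the associated function one has the built-in bound $M(t)\ge\log t-\log M_1$ with coefficient exactly $1$; combined with the (BMM)-type inequality \eqref{CBbmmM} for $M$, this produces \eqref{CB4} for $M$ in a single step, and Theorem~\ref{CBlemma5} concludes. Either way, the amplification of the logarithmic lower bound is the crux, and everything else is routine monotonicity and constant-chasing.
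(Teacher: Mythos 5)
Your proof is correct and takes the same route as the paper: the corollary is read off Theorem~\ref{CBteo3} once one checks that (BMM) implies \eqref{CB4}, an implication the paper only asserts (in Example~\ref{CBrem9}) and which your iteration of (BMM) combined with property~$(\gamma)$ supplies correctly, the amplification of the slope $b$ to $(2^n-1)b\ge 1$ being exactly the right fix for the case $b<1$. Your ``slicker variant'' through $M(t)\ge\log t-\log M_1$ and \eqref{CBbmmM} is also sound and matches the spirit of the paper's earlier remark that $(M2)'$ implies \eqref{CB4}.
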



\begin{Prop}
\label{CBprop1}
Let $\omega$ be a weight function  satisfying (BMM) and $(M_p)$ the sequence
defined by \eqref{CBmp}. Then
 $\CBs_\omega(\CBr^d)$ is equal (as vector space and as locally convex space) to
$\CBs_{(M_p)}$ and isomorphic to $\Lambda_{(M_p)}=\Lambda_\omega$.
\end{Prop}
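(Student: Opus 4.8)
The final statement is Proposition~\ref{CBprop1}: under (BMM), the space $\CBs_\omega(\CBr^d)$ coincides with $\CBs_{(M_p)}$ as a locally convex space and is isomorphic to $\Lambda_{(M_p)}=\Lambda_\omega$, where $M_p=e^{\varphi^*(p)}$.

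My plan is to verify the two claims separately, drawing on the machinery already assembled in the section. The equality $\CBs_\omega=\CBs_{(M_p)}$ as topological vector spaces should follow from comparing the defining seminorm systems. On the $\CBs_{(M_p)}$ side, membership is governed by the quantities $j^{|\alpha+\beta|}/M_{|\alpha+\beta|}$, while the system \eqref{CBeffe} for $\CBs_\omega$ uses the weights $e^{-\lambda\varphi^*(|\alpha+\beta|/\lambda)}$. Since $M_p=e^{\varphi^*(p)}$ by \eqref{CBmp}, the bridge between $1/M_{|\alpha+\beta|}=e^{-\varphi^*(|\alpha+\beta|)}$ and $e^{-\lambda\varphi^*(|\alpha+\beta|/\lambda)}$ is exactly the homogeneity relation $\lambda\varphi^*(p/\lambda)$ versus $\varphi^*(p)$, which one controls using that $\varphi^*$ is convex with $\varphi^*(0)=0$ (so $\varphi^*(s)/s$ is increasing) together with condition (BMM) translated to $M(t)$ via \eqref{CBbmmM}. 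First I would show each seminorm $p_\lambda$ from \eqref{CBeffe} is dominated by some seminorm of the $\CBs_{(M_p)}$ type, and conversely, by choosing the parameters $j$, $\lambda$ appropriately and absorbing the discrepancy between $\lambda\varphi^*(p/\lambda)$ and $\varphi^*(p)$ into a shift of the index, much as was done in deriving \eqref{CB12lan} and \eqref{CBbmmM}. The equivalence \eqref{CBequiv} of $M(t)$ and $\omega(t)$ guarantees the two families of seminorms are comparable, yielding equality as Fréchet spaces; continuity of the identity in both directions then gives the topological isomorphism by the open mapping theorem.

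For the second claim, I would invoke Langenbruch's isomorphism $\CBs_{(M_p)}\cong\Lambda_{(M_p)}$ from Theorem~3.4 of \cite{CBlan}, which applies because $(M_p)$ satisfies the required hypotheses: we have already checked in this section that $(M_p)$ satisfies $(M1)$, $(M2)$ (hence $(M2)'$), and \eqref{CB12lan}. Thus the isomorphism $\CBs_\omega=\CBs_{(M_p)}\cong\Lambda_{(M_p)}$ follows at once. The identification $\Lambda_{(M_p)}=\Lambda_\omega$ is precisely the remark made just before the statement of Theorem~\ref{CBteo3}: under (BMM), the weights $e^{M(jk^{1/2})}$ defining $\Lambda_{(M_p)}$ and $e^{\omega(jk^{1/2})}$ defining $\Lambda_\omega$ in \eqref{CBlambdaomega} generate the same space, by \eqref{CBequiv} and \eqref{CBbmmM}. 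Concatenating these identifications completes the proof.

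The step I expect to be the main obstacle is establishing the equality $\CBs_\omega=\CBs_{(M_p)}$ with matching topologies, specifically the careful comparison between the seminorm weight $e^{-\lambda\varphi^*(|\alpha+\beta|/\lambda)}$ and $1/M_{|\alpha+\beta|}=e^{-\varphi^*(|\alpha+\beta|)}$. The difficulty is that the $\CBs_\omega$ seminorms rescale the argument of $\varphi^*$ by the parameter $\lambda$, so one must show that varying $\lambda$ over all positive reals produces the same scale of growth as varying $j$ over $\CBn$ in the $\CBs_{(M_p)}$ norms; this requires using the subadditive-type behavior of $\varphi^*$ encoded in (BMM) to trade a dilation in the weight for a controlled loss in the constant, and to handle the two directions of the inequality one needs both the convexity of $\varphi^*$ and property \eqref{CBbmmM}. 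Once this seminorm comparison is in place, the remaining identifications are immediate from results already proved.
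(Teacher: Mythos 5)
Your proposal follows essentially the same route as the paper: the only difference is that where you propose to verify the comparison between the weights $e^{-\lambda\varphi^*(p/\lambda)}$ and $j^{p}/M_p$ by hand from (BMM) and the convexity of $\varphi^*$, the paper simply cites formulas (5) and (6) of \cite{CBbmm}, which are exactly those two inequalities. The remaining steps --- Langenbruch's Theorem~3.4 applied with $(M1)$, $(M2)$ and \eqref{CB12lan}, and the identification $\Lambda_{(M_p)}=\Lambda_\omega$ via \eqref{CBequiv} and \eqref{CBbmmM} --- coincide with the paper's proof.
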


\begin{proof}
We endow $\CBs_\omega(\CBr^d)$ with the family of seminorms 
\eqref{CBeffe}. It is isomorphic (and hence equal) to
$\CBs_{(M_p)}$ because, by \cite[formulas (5), (6)]{CBbmm}, 
the following two conditions hold:
\begin{equation*}
\forall j\in\CBn\,\exists \lambda,c>0\ \mbox{s.t.}
\qquad e^{\lambda\varphi^*\left(\frac p\lambda\right)}\leq cj^{-p}M_p,
\qquad\forall p\in\CBn_0,
\end{equation*}
and
\begin{equation*}
\forall \lambda>0\,\exists j\in\CBn,C>0\ \mbox{s.t.}
\qquad j^{-p}M_p\leq Ce^{\lambda\varphi^*\left(\frac p\lambda\right)},
\qquad\forall p\in\CBn_0.
\end{equation*}

Finally, $\CBs_{(M_p)}$  is isomorphic to
$\Lambda_{(M_p)}$ by Theorem~3.4 of \cite{CBlan}, since $(M_p)$ satisfies $(M2)$ (stronger than $(M2)'$)
and \eqref{CB12lan}. Moreover $\Lambda_{(M_p)}$ coincides with $\Lambda_\omega$, as we already remarked in the comment for formula \eqref{CBlambdaomega}.
\end{proof}

Condition \eqref{CB4}, written in terms of the weight function $\omega$, is equivalent to the nuclearity of $\Lambda_\omega$ by 
Theorem~\ref{CBteo3}, but it is not necessary for the nuclearity of $\CBs_\omega$.
For example, the weight $\omega_0(t)$ defined by \eqref{CBomega0} does not
satisfy \eqref{CB4} and hence $\Lambda_{\omega_0}$ is not nuclear, while
$\CBs$ is well known to be nuclear.
In particular, $\Lambda_{\omega_0}$ and $\CBs$ are not isomorphic. On the other hand, from the results that we have we do not know if condition \eqref{CB4} is sufficient for the nuclearity of $\CBs_{\omega}$, but we need the stronger condition (BMM), as we state in the following

\begin{Th}
\label{CBthnuclear}
Let $\omega$ be a weight function satisfying (BMM). Then $\CBs_\omega$ is
a nuclear space.
\end{Th}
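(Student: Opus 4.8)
The plan is to assemble the final statement purely from results already proved in the paper, since the heavy lifting has been done in Theorem~\ref{CBlemma5}, Corollary~\ref{CBcor1}, and Proposition~\ref{CBprop1}. The logical chain I would follow is: (BMM) on $\omega$ guarantees, via \eqref{CBbmmM}, that the associated function $M(t)$ of the sequence $(M_p)=e^{\varphi^*(p)}$ satisfies (BMM) as well, and in particular satisfies \eqref{CB4}; hence by Theorem~\ref{CBlemma5} the K\"othe space $\Lambda_{(M_p)}$ is nuclear. The point is then to transport nuclearity from the sequence space to the function space $\CBs_\omega$ through an isomorphism.

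First I would invoke Proposition~\ref{CBprop1}: under (BMM) the space $\CBs_\omega(\CBr^d)$ is isomorphic (indeed equal, as a locally convex space) to $\CBs_{(M_p)}$, which in turn is isomorphic to $\Lambda_{(M_p)}=\Lambda_\omega$. Since nuclearity is preserved under topological isomorphisms of locally convex spaces, it suffices to know that $\Lambda_\omega$ is nuclear. That is exactly the content of Corollary~\ref{CBcor1}, which states that $\Lambda_\omega$ is nuclear whenever $\omega$ satisfies (BMM). Therefore $\CBs_\omega$, being isomorphic to a nuclear space, is itself nuclear. A brief worked version of this proof reads as follows.

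\medskip
Assume $\omega$ satisfies (BMM) and let $(M_p)$ be the sequence defined by \eqref{CBmp}. By Corollary~\ref{CBcor1}, the space $\Lambda_\omega$ is nuclear. By Proposition~\ref{CBprop1}, the space $\CBs_\omega(\CBr^d)$ is isomorphic as a locally convex space to $\Lambda_{(M_p)}=\Lambda_\omega$. Since nuclearity is invariant under topological isomorphism, $\CBs_\omega(\CBr^d)$ is nuclear.
\medskip

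I do not expect any genuine obstacle here, because all the analytic work is encapsulated in the earlier statements; the only subtlety worth flagging is that the whole argument hinges on (BMM) rather than the weaker condition \eqref{CB4}. The reason the stronger hypothesis is genuinely needed, and cannot be relaxed to \eqref{CB4}, is that (BMM) is precisely what forces $\Lambda_{(M_p)}$ to coincide with $\Lambda_\omega$ and what makes Proposition~\ref{CBprop1} applicable; the counterexample $\omega_0$ discussed just before the theorem shows that under \eqref{CB4} alone the sequence space and the function space need not even be isomorphic, so nuclearity of $\CBs_\omega$ would not follow from nuclearity of $\Lambda_\omega$. Thus the essential care in writing the proof is to cite (BMM) at exactly the two points where it is used—deriving nuclearity of $\Lambda_\omega$ and establishing the isomorphism with $\CBs_\omega$—and then to close with the standard fact that isomorphisms transfer nuclearity.
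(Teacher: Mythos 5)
Your proof is correct and follows exactly the same route as the paper, which likewise deduces the theorem from Proposition~\ref{CBprop1} and Corollary~\ref{CBcor1} together with the invariance of nuclearity under topological isomorphism. Your additional remark on why (BMM) rather than \eqref{CB4} is needed also matches the paper's own discussion preceding the theorem.
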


\begin{proof}
It follows from Proposition~\ref{CBprop1} and Corollary~\ref{CBcor1}.
\end{proof}

%

\begin{Ex}
\begin{em}
There exist sequences $(M_p)_p$ satisfying  \eqref{CB12lan} and \eqref{CB4} (for 
the associated function), but not (M2).

Let us consider, for example, a weight function $\omega$ satisfying \eqref{CB4} but not
(BMM) (see Example~\ref{CBrem9}) and construct the sequence $(M_p)$ as in
\eqref{CBmp}. Then $M_p$ satisfies (M1),  \eqref{CB12lan}  and its associated function
satisfies \eqref{CB4} because, by \eqref{CBequiv} and \eqref{CBlog}:
\begin{align*}
M(t)+\log t &\leq \omega(t)+2\log t-\log t
\leq \omega(H^2t)+C_{2,H}-\log t\\
&\leq M(H^2t)+\log(H^2t)+C_{2,H}-\log t
=M(H^2t)+2\log H+C_{2,H}.
\end{align*}
However, $M(t)$ does not satisfy (BMM) because $\omega(t)$ does not satisfy (BMM)
(see \eqref{CBequiv}), therefore $M_p$ does not satisfy (M2) by
\cite[Prop. 3.6]{CBk1}.
\end{em}
\end{Ex}

The example above furnishes a sequence $(M_p)$ satisfying (M1), but not (M2), for which the space $\CBs_{(M_p)}$ is nuclear, by Corollary~\ref{CBsmpnuclear} and Remark~\ref{rem3}. Comparing with \cite{CBppv} it is then interesting
the following
\begin{Cor}
Condition (M2) is not necessary for the nuclearity of $S_{(M_p)}$.
\end{Cor}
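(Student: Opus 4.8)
The plan is to establish this negative statement by exhibiting a single sequence $(M_p)$ that is simultaneously admissible for our nuclearity criterion and deficient with respect to (M2); a counterexample is all that a non-necessity claim requires. For the nuclearity half I would lean entirely on the sufficient condition already assembled in this section: by Corollary~\ref{CBsmpnuclear} together with Remark~\ref{rem3}, any sequence satisfying \eqref{CB12lan} whose associated function $M(t)$ satisfies \eqref{CB4} yields a nuclear $S_{(M_p)}$. Thus the task reduces to finding such a sequence for which (M2) fails.

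To locate the failure of (M2), I would pass to the associated function. Under the standing hypotheses, (M2) for $(M_p)$ corresponds to condition (BMM) for $M(t)$ via \cite[Prop. 3.6]{CBk1}; hence I need a weight that obeys \eqref{CB4} but not (BMM). Example~\ref{CBrem9} provides one, namely $\omega(t)=\log^2 t$ for $t>1$. Setting $(M_p)=e^{\varphi^*(p)}$ as in \eqref{CBmp}, I would then verify the three required properties. Condition \eqref{CB12lan} holds because $\omega(t)=o(t)$, exactly as computed earlier in the section. The associated function satisfies \eqref{CB4} by combining the equivalence \eqref{CBequiv} with \eqref{CBlog}, which absorbs the extra $\log t$. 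Finally, since $\omega$ fails (BMM), so does $M(t)$ by \eqref{CBequiv}, and therefore $(M_p)$ fails (M2), again by \cite[Prop. 3.6]{CBk1}.

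This is precisely the sequence produced in the Example preceding the statement, so the corollary is immediate from it. The only genuine content is conceptual rather than computational: recognizing that \eqref{CB4} is strictly weaker than (BMM), which is exactly the wedge between the condition governing nuclearity of $S_{(M_p)}$ and the condition (M2) governing stability under ultradifferential operators. Once the gap between these two is populated by a concrete weight, every verification is routine and has already been carried out, so I do not anticipate any serious obstacle beyond exhibiting the separating example and comparing the outcome with \cite{CBppv}.
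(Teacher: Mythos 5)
Your proposal is correct and follows essentially the same route as the paper: the authors also derive the corollary directly from the preceding Example (built on the weight of Example~\ref{CBrem9} via \eqref{CBmp}), invoking Corollary~\ref{CBsmpnuclear} together with Remark~\ref{rem3} for nuclearity and \cite[Prop.~3.6]{CBk1} with \eqref{CBequiv} for the failure of (M2). No gaps to report.
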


\vspace*{10mm}
{\bf Acknowledgments.}
The authors were partially supported by the Projects FAR 2017, FAR 2018 and FIR 2018 (University of Ferrara), FFABR 2017 (MIUR). 
The research of the second author was partially supported by the project MTM2016-76647-P.
The first and third authors are members of the Gruppo Nazionale per l'Analisi Matematica, la Probabilit\`a e le loro Applicazioni (GNAMPA) 
of the Istituto Nazionale di Alta Matematica (INdAM).


\end{document}